\documentclass[leqno]{aadmbook}
\usepackage{amsthm}
\usepackage{amsfonts}
\usepackage{amsmath}
\usepackage{graphicx}

\usepackage{amssymb}
\usepackage{dsfont}
\usepackage [latin1]{inputenc}

\pagestyle{myheadings}

\textwidth 28cc

\markboth{{\small\rm \hfill Jos\'{e} A. Adell
\hfill}\hspace{-\textwidth}%
\underline{${{}_{}}_{}$\hspace{\textwidth}}}
{\underline{${{}_{}}_{}$\hspace{\textwidth}}\hspace{-\textwidth}%
{\small\rm \hfill Explicit estimates for the Stirling numbers of the second kind
\hfill}}

\setcounter{page}{1}
\textheight 42cc

\parskip .5mm

\parindent 2cc

\begin{document}


\newtheorem{theorem}{Theorem}[section]
\newtheorem{lemma}[theorem]{Lemma}
\newtheorem{proposition}[theorem]{Proposition}
\newtheorem{corollary}[theorem]{Corollary}

\theoremstyle{definition}
\newtheorem{definition}[theorem]{Definition}
\newtheorem{example}[theorem]{Example}
\newtheorem{xca}[theorem]{Exercise}

\theoremstyle{remark}
\newtheorem{remark}[theorem]{Remark}

\oddsidemargin 16.5mm
\evensidemargin 16.5mm

\thispagestyle{plain}

\vspace{5cc}
\begin{center}

{\large\bf  EXPLICIT ESTIMATES FOR THE STIRLING NUMBERS OF THE SECOND KIND
\rule{0mm}{6mm}\renewcommand{\thefootnote}{}
\footnotetext{\scriptsize 2020 Mathematics Subject Classification. 05A18, 60E05.

\rule{2.4mm}{0mm}Keywords and Phrases. Stirling numbers of the second kind, explicit estimates, geometric distribution, exponential distribution}}

\vspace{1cc}
{\large\it Jos\'{e} A. Adell}

\vspace{1cc}
\parbox{24cc}{{\small



We give explicit estimates for the Stirling numbers of the second kind $S(n,m)$. With a few exceptions, such estimates are asymptotically sharp. The form of these estimates varies according to $m$ lying in the central or non-central regions of $\{1,\ldots ,n\}$. In each case, we use a different probabilistic representation of $S(n,m)$ in terms of well known random variables to show the corresponding results.

}}
\end{center}



\section{Introduction}\label{s1}

The Stirling numbers of the second kind $S(n,m)$ can be defined in various equivalent ways (cf. Comtet  \cite[Chap. 5]{Com1974}). For instance,
\begin{equation}\label{eq1.a}
    z^n=\sum_{m=0}^n S(n,m)(z)_m,\quad z\in \mathds{C},
\end{equation}
where $(z)_m=z(z-1)\cdots (z-m+1)$, $m=1,2,\ldots$ is the falling factorial ($(z)_0=1$) or, via their generating function, as
\begin{equation}\label{eq1.b}
    \dfrac{(e^z-1)^m}{m!}=\sum_{n=m}^\infty S(n,m) \dfrac{z^n}{n!},\quad z\in \mathds{C}.
\end{equation}
Since these numbers play an important role in different branches of mathematics, one can find in the literature many papers devoted to obtain manageable formulae for them. Such formulae date back at least to Jordan \cite{Jor1991}, who obtained
\begin{equation}\label{eq1.c}
    S(n,m)\sim \dfrac{m^n}{m!},\qquad S(n,n-m)\sim\dfrac{n^{2m}}{2^m m!}.
\end{equation}
However, such estimates are acceptable only for $m$ quite small in comparison with $n$. Asymptotic expansions for $S(n,m)$, as $n\to \infty$, according to $m$ lying in the central or non-central regions of $\{1,2,\ldots ,n\}$ have been provided by many authors. In this regard, Sachkov \cite[p. 144]{Sac1997} obtained
\begin{equation*}
    S(n,m)=\dfrac{m^n}{m!}\exp \left ( \left ( \dfrac{n}{m}-m\right )e^{-n/m}\right ) (1+o(1)),\quad m<n/\log n.
\end{equation*}
Hsu \cite{Hsu1948} and Chelluri et al. \cite{CheRicTem2000} gave
\begin{equation*}
    S(n,m)=\dfrac{n^{2(n-m)}}{2^{n-m}(n-m)!}\left (1+O(n^{-1/3}) \right ),\quad n-n^{1/3}\leq m\leq n.
\end{equation*}
Moser and Wyman \cite{MosWym1958} and Chelluri et al. \cite{CheRicTem2000} showed that
\begin{equation}\label{eq1.d}
    S(n,m)=\dfrac{n!(e^R-1)^m}{2R^nm!\sqrt{\pi m R H}} \left ( 1+ O (1/m)\right ), \quad 0< \delta\leq m\leq n-n^{1/3},
\end{equation}
where $R$ is the solution to the equation
\begin{equation*}
    R(1-e^{-R})^{-1}=\dfrac{n}{m},
\end{equation*}
and
\begin{equation*}
    H=e^R(e^R-1-R)/2(e^R-1)^2.
\end{equation*}
More specific formulae for $S(n,n-n^a)$, for $1/2<a<1$, can be found in Louchard \cite{Lou2013}.

It seems that less attention has been paid to obtain explicit upper and lower bounds for $S(n,m)$. As far as we know, Moser and Wyman \cite{MosWym1958} provided, whenever $m=n-o(\sqrt{n})$,

\begin{equation}\label{eq1.f}
    S(n,m)=\binom{n}{m} q^{-(n-m)} \left ( \sum_{k=0}^s A_k (n-m)q^k+E_s\right ),\quad q=\dfrac{2}{n-m},
\end{equation}
where $0\leq s\leq n-m-1$, $A_k(r)$ is a polynomial in $r$ for each $k=1,2,\ldots $, and
\begin{equation*}
    |E_s|\leq \dfrac{(2(n-m)^2/5m)^{s+1}}{1-(2(n-m)^2)/5m}.
\end{equation*}
As upper and lower bounds with no essential restriction on $m$, we mention that Rennie and Dobson \cite{RenDob1969} gave
\begin{equation*}
    \dfrac{1}{2}(m^2+m+2)m^{n-m-1}-1 \leq S(n,m)\leq \dfrac{1}{2}\binom{n}{m}m^{n-m},\quad 1\leq m\leq n-1.
\end{equation*}
More recently, using Stein-Chen Poisson approximation, Arratia and DeSalvo \cite{ArrDeS2017} showed that
\begin{equation}\label{eq1.h}
    \binom{N}{m}e^{-2\mu}(1-e^{2\mu}D_{n,m})\leq S(n,n-m)\leq \binom{N}{n}e^{-2\mu}(1+e^{2\mu}D_{n,m}),
\end{equation}
for $n\geq 3$ and $n\geq m\geq 2$, where
\begin{equation*}
    N=\binom{n}{2},\quad \mu=\dfrac{\binom{m}{2}\binom{n}{3}}{\binom{N}{2}},
\end{equation*}
and $D_{n,m}$ is an error term uniformly bounded by 1 which goes to zero if $m=O(\sqrt{n})$. We emphasize that estimates \eqref{eq1.f} and \eqref{eq1.h} are asymptotically sharp provided that $m=n-o(\sqrt{n})$ and $m=O(\sqrt{n})$, respectively.

The aim of this paper is twofold. On the one hand, to give explicit estimates for $S(n,m)$ in the whole range of $m$. Moreover, we show in Section~5 that our estimates are asymptotically sharp, excepting when $n-m\sim m\log (mr_m)$, with $r_m\sim r\in (0,\infty)$, case in which we only are able to give upper an lower bounds of the same order of magnitude.

On the other hand, to open the door to possible extensions to more general numbers, such as the Comtet numbers of the second kind. In fact, many of the results reviewed above are based on formula \eqref{eq1.b}, which seems difficult to extend to other numbers. However, our starting point is the well known formula
\begin{equation}\label{eq1.i}
    \sum_{j=m}^\infty S(j,m)z^j= \dfrac{z^m}{(1-z)(1-2z)\cdots (1-mz)},\quad z\in \mathds{C},\quad |z|<1/m,
\end{equation}
which has a direct generalization to the Comtet numbers of the second kind (see, for instance, Comtet \cite{Com1972} and El-Desouky et al. \cite{ElDCakShi2017}).

To achieve our results, we give different probabilistic representations of $S(n,m)$, whose usefulness depends on the values of $m$. Such representations, written in terms of sums of independent random variables involving the geometric and the exponential distributions, may be of independent interest. Among them, we highlight the following
\begin{equation}\label{eq1.j}
    S(n,m)=\dfrac{m^n}{m!}P(V_{m-1}\leq n-m),\quad 2\leq m\leq n,
\end{equation}
where $V_{m-1}=X(1/m)+\cdots + X((m-1)/m)$ is a sum of $m-1$ independent random variables such that $X(j/m)$ has the geometric distribution with failure probability $j/m$ (a similar representation in terms of multinomial laws was provided in \cite{AdeCar2021}). Having in mind the first asymptotic formula in \eqref{eq1.c}, representation \eqref{eq1.j} gives explicit and sharp estimates for $S(n,m)$ for small values of $m$ in a very simple way (see Theorem~\ref{th4} in Section~3).

This would be the case for other values of $m$, if the random variable $V_{m-1}$, properly standardized, would satisfy the central limit theorem. Unfortunately, the limiting random variable $V$ is rather involved and does not allow us to give a unified treatment for any value of $m$. For these reasons, we provide different probabilistic representations of $S(n,m)$ according to $m$ lying in the central or non-central regions of $\{1,2,\ldots ,n\}$.

The paper is organized as follows. In the following section, we collect the aforementioned probabilistic representations for $S(n,m)$. In Sections~3 and 4, we estimate $S(n,m)$ for $m$ in the non-central and in the central regions, respectively, whereas Section~5 is devoted to discuss in detail the accuracy of our estimates. The main results are stated in Theorems~\ref{th3}, \ref{th4}, and \ref{th5}.

\section{Probabilistic representations}\label{s2}

Let $\mathds{N}$ be the set of positive integers and $\mathds{N}_0=\mathds{N}\cup \{0\}$. Throughout this paper, we always assume that $n,m\in \mathds{N}$ with $m\leq n$ and $z\in \mathds{C}$. The indicator function of the set $A$ is denoted by $1_A$, whereas $\mathds{E}$ stands for mathematical expectation. Unless otherwise specified, all of the random variables appearing in a same expression are supposed to be mutually independent.

To give suitable probabilistic representations of $S(n,m)$, we consider the following random variables. In the first place, let $T$ be a random variable having the exponential density $\rho(\theta)=e^{-\theta}$, $\theta \geq 0$. It is well known that
\begin{equation}\label{eq2.1}
    \mathds{E}T=\text{Var}(T)=1,\qquad \mathds{E}e^{zT}=\dfrac{1}{1-z},\quad |z|<1.
\end{equation}
If $(T_j)_{1\leq j\leq m}$ is a finite sequence of independent copies of $T$, we denote
\begin{equation}\label{eq2.2}
    S_m=T_1+2T_2+ \cdots +mT_m.
\end{equation}

In the second place, let $X(q)$ be a random variable having the geometric distribution with failure probability $q\in (0,1)$, that is,
\begin{equation}\label{eq2.3}
    P(X(q)=k)=pq^k,\quad k\in \mathds{N}_0,\quad p=1-q.
\end{equation}
It is known that
\begin{equation}\label{eq2.4}
    \mathds{E}X(q)=\dfrac{q}{p},\quad \text{Var}(X(q))=\dfrac{q}{p^2},\quad \mathds{E}z^{X(q)}=\dfrac{p}{1-qz},\quad |z|<1/q.
\end{equation}
If $(X(q_j))_{1\leq j\leq m}$ is a finite sequence of independent random variables such that each $X(q_j)$ has the geometric distribution as in \eqref{eq2.3}, we denote
\begin{equation}\label{eq2.5}
    Y_m=X(q_1)+\cdots +X(q_m).
\end{equation}

The following auxiliary result will be needed.

\begin{lemma}\label{l1}
Let $f:\mathds{N}_0\to \mathds{R}$ be a bounded function and let $a>0$ be such that $aq_j<1$, $j=1,\ldots ,m$. Then,
\begin{equation*}
    \mathds{E}a^{Y_m}f(Y_m)=\mathds{E}a^{Y_m}\mathds{E}f(X(aq_1)+\cdots +X(aq_m)).
\end{equation*}
\end{lemma}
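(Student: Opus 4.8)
The plan is to establish the identity by exponential tilting: multiplication by the random weight $a^{Y_m}$ reweights the law of each geometric summand, turning the failure probability $q_j$ into $aq_j$. The hypothesis $aq_j<1$ guarantees that $X(aq_j)$ is a genuine geometric random variable as in \eqref{eq2.3}, and the boundedness of $f$ makes all the sums below absolutely convergent, so no interchange of summation requires extra justification.

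First I would exploit the mutual independence of the $X(q_j)$ to write the left-hand side as a multiple sum. Setting $k=k_1+\cdots+k_m$ and using the pmf \eqref{eq2.3},
\begin{equation*}
    \mathds{E}a^{Y_m}f(Y_m)=\sum_{k_1,\ldots,k_m\in\mathds{N}_0} f(k)\prod_{j=1}^m a^{k_j}(1-q_j)q_j^{k_j}.
\end{equation*}

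The key step is the one-dimensional tilting identity. For each $j$, by \eqref{eq2.3} together with the moment generating function in \eqref{eq2.4} evaluated at $z=a$,
\begin{equation*}
    a^{k_j}(1-q_j)q_j^{k_j}=\frac{1-q_j}{1-aq_j}\,(1-aq_j)(aq_j)^{k_j}=\mathds{E}a^{X(q_j)}\,P(X(aq_j)=k_j).
\end{equation*}
Substituting this into the multiple sum and invoking independence once more, the constant $\prod_{j=1}^m \mathds{E}a^{X(q_j)}=\mathds{E}a^{Y_m}$ factors out, leaving precisely $\mathds{E}a^{Y_m}\,\mathds{E}f(X(aq_1)+\cdots+X(aq_m))$, which is the claimed right-hand side.

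I do not expect a real obstacle here beyond bookkeeping; the argument is a direct computation rather than anything delicate. The only points requiring care are that $aq_j<1$, so that the tilted weights genuinely form probability distributions and $\mathds{E}a^{X(q_j)}$ is finite via \eqref{eq2.4}, and that $f$ be bounded, so that the rearrangement of the absolutely convergent series is legitimate. Both are exactly the hypotheses of the statement.
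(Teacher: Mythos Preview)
Your argument is correct: the direct multiple-sum expansion together with the one-dimensional tilting identity
\[
a^{k_j}(1-q_j)q_j^{k_j}=\mathds{E}a^{X(q_j)}\,P(X(aq_j)=k_j)
\]
immediately yields the claim, and your remarks about absolute convergence and the hypothesis $aq_j<1$ are exactly what is needed.

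The paper's proof is organized differently: it argues by induction on $m$, writing $Y_m=Y_{m-1}+X(q_m)$, expanding only the last summand to convert $X(q_m)$ into $X(aq_m)$, and then invoking the induction hypothesis for $Y_{m-1}$. The underlying mechanism is the same exponential tilting you use, but the paper peels off one coordinate at a time rather than tilting all $m$ coordinates simultaneously. Your version is a bit more direct and avoids the inductive scaffolding; the paper's version has the minor advantage of never writing out the full $m$-fold sum, which some readers may find tidier. Neither approach offers a real advantage over the other here.
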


\begin{proof}
Set $Y_m=Y_{m-1}+X(q_m)$. By \eqref{eq2.4} and the independence of the random variables involved, we have
\begin{equation*}
    \begin{split}
       & \dfrac{\mathds{E}a^{Y_m}f(Y_m)}{\mathds{E}a^{Y_m}}=\dfrac{1}{\mathds{E}a^{Y_{m-1}}\mathds{E}a^{X(q_m)}}\mathds{E}a^{Y_{m-1}+X(q_m)}f(Y_{m-1}+X(q_m)) \\
        & =\dfrac{1}{\mathds{E}a^{Y_{m-1}}}\, \dfrac{1-aq_m}{1-q_m}\sum_{k=0}^\infty (1-q_m)(aq_m)^k \mathds{E}a^{Y_{m-1}}f(Y_{m-1}+k)\\
        &=\dfrac{\mathds{E}a^{Y_{m-1}}f(Y_{m-1}+X(aq_m))}{\mathds{E}a^{Y_{m-1}}},
    \end{split}
\end{equation*}
since $Y_{m-1}$ and $X(aq_m)$ are independent. Thus, the conclusion follows by induction on $m$.
\end{proof}

Denote
\begin{equation}\label{eq2.6}
    W_m(q)=X(q_1)+\cdots + X(q_m),\quad q_j=\dfrac{qj}{m},\quad j=1,\ldots, m,\quad 0<q<1.
\end{equation}
Various probabilistic representations of $S(n,m)$ in terms of the random variables considered in \eqref{eq2.2} and \eqref{eq2.6} are provided in the following result.

\begin{theorem}\label{th2}
For any $0<q<1$, we have
\begin{equation*}
    \begin{split}
      S(n,m) & =\dfrac{\mathds{E}S_m^{n-m}}{(n-m)!}=\dfrac{m^nq^{m-n}}{\prod_{j=1}^m (m-qj)}\dfrac{1}{2\pi} \int_{-\pi}^{\pi}\mathds{E}e^{i\theta(W_m(q)-n+m)}\, d\theta \\
        & =\dfrac{m^n}{m!} P\left ( X\left (1/m\right )+\cdots +X\left ((m-1)/m\right )\leq n-m\right ),
    \end{split}
\end{equation*}
where the last equality holds for $m\geq 2$.
\end{theorem}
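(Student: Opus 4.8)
The plan is to derive all three identities by computing the relevant generating functions and matching them coefficientwise against the known expansion \eqref{eq1.i}. It is convenient to rewrite \eqref{eq1.i}, after dividing by $z^m$, as
\[
\prod_{j=1}^m(1-jz)^{-1}=\sum_{n\geq m}S(n,m)z^{n-m},\qquad |z|<1/m,
\]
which serves as the template for every comparison below; since both sides of each identity we produce are analytic on a common disc, equality of power-series coefficients is legitimate.

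For the first equality I would use \eqref{eq2.1} together with the independence of the $T_j$ to compute the moment generating function of $S_m=T_1+2T_2+\cdots+mT_m$, namely $\mathds{E}e^{zS_m}=\prod_{j=1}^m\mathds{E}e^{jzT_j}=\prod_{j=1}^m(1-jz)^{-1}$ for $|z|<1/m$. Expanding the left-hand side as $\sum_{\ell\geq 0}(\mathds{E}S_m^\ell/\ell!)\,z^\ell$ and matching the coefficient of $z^{n-m}$ against the template gives $S(n,m)=\mathds{E}S_m^{n-m}/(n-m)!$.

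For the second equality I would first observe that, since $W_m(q)$ is $\mathds{N}_0$-valued, orthogonality of the characters $\theta\mapsto e^{ik\theta}$ on $[-\pi,\pi]$ yields $\frac{1}{2\pi}\int_{-\pi}^\pi\mathds{E}e^{i\theta(W_m(q)-n+m)}\,d\theta=P(W_m(q)=n-m)$, the interchange of sum and integral being immediate from $\sum_k P(W_m(q)=k)=1$. It then suffices to check that $m^nq^{m-n}(\prod_{j=1}^m(m-qj))^{-1}P(W_m(q)=n-m)=S(n,m)$. By \eqref{eq2.4} and \eqref{eq2.6} the probability generating function factors as $\mathds{E}z^{W_m(q)}=\prod_{j=1}^m\frac{m-qj}{m-qjz}$; substituting $w=qz/m$ into the template and reading off the coefficient of $z^{n-m}$ gives $P(W_m(q)=n-m)=(q/m)^{n-m}\,m^{-m}\,S(n,m)\prod_{j=1}^m(m-qj)$, whence a short cancellation of the powers of $m$ and $q$ yields the claim.

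For the third equality, valid for $m\geq 2$, write $V_{m-1}=X(1/m)+\cdots+X((m-1)/m)$. I would compute $\mathds{E}w^{V_{m-1}}=\prod_{j=1}^{m-1}\frac{m-j}{m-jw}=(m-1)!\,(\prod_{j=1}^{m-1}(m-jw))^{-1}$ and pass to the generating function of the partial sums $P(V_{m-1}\leq\ell)$ by multiplying by $(1-w)^{-1}$. The crucial point is that $(1-w)^{-1}=m(m-mw)^{-1}$ supplies exactly the missing factor $j=m$, so that $\sum_{\ell\geq 0}P(V_{m-1}\leq\ell)\,w^\ell=m!\,(\prod_{j=1}^m(m-jw))^{-1}$; comparing the coefficient of $w^{n-m}$ with the template after the substitution $z=w/m$ gives $S(n,m)=(m^n/m!)\,P(V_{m-1}\leq n-m)$. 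I expect the main difficulty to be organizational rather than conceptual: keeping the powers of $m$ and $q$ straight through the substitutions $w=qz/m$ and $z=w/m$, and ensuring that each generating-function identity is invoked only inside its disc of convergence (the circle $|z|=1$ indeed lies in the domain $|z|<1/q$ of the second one, since $q_j\leq q<1$) so that the coefficientwise comparisons are valid.
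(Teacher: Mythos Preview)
Your argument is correct. For the first two identities you proceed essentially as the paper does: expand the generating function \eqref{eq1.i} via the moment generating function of $S_m$, respectively the probability generating function of $W_m(q)$, and read off coefficients (the paper substitutes $z=qe^{i\theta}/m$ directly into \eqref{eq1.i} and then integrates, which amounts to the same computation in a slightly different order).

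For the third identity your route is genuinely different and more elementary than the paper's. The paper derives it from the second identity by decomposing $W_m(q)=Y_{m-1}+X(q)$, writing $P(W_m(q)=n-m)=pq^{n-m}\,\mathds{E}(1/q)^{Y_{m-1}}1_{[0,n-m]}(Y_{m-1})$, and then invoking Lemma~\ref{l1} (an exponential change of measure for sums of independent geometric variables) with $a=1/q$ to turn this into $pq^{n-m}\,\mathds{E}(1/q)^{Y_{m-1}}\,P(V_{m-1}\le n-m)$; a separate evaluation of $\mathds{E}(1/q)^{Y_{m-1}}$ then yields the constant $m^n/m!$. Your approach bypasses Lemma~\ref{l1} entirely: you simply observe that $(1-w)^{-1}\,\mathds{E}w^{V_{m-1}}$ is the generating function of the cumulative probabilities $P(V_{m-1}\le \ell)$, and that the factor $(1-w)^{-1}=m/(m-mw)$ supplies exactly the missing $j=m$ term so that the product matches \eqref{eq1.i} after the substitution $z=w/m$. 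This is shorter and self-contained; the paper's change-of-measure argument, on the other hand, ties the third representation explicitly to the second one and isolates Lemma~\ref{l1} as a tool that may be reused elsewhere.
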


\begin{proof}
The first equality was shown in \cite[Thm. 3.1]{AdeLek2019}. We give here a short proof of it for the sake of completeness. Starting from \eqref{eq1.i} and using \eqref{eq2.1} and \eqref{eq2.2}, we have for $|z|<1/m$
\begin{equation*}
    \sum_{n=m}^\infty S(n,m)z^n=z^m\mathds{E}e^{zS_m}=z^m \sum_{j=0}^\infty \dfrac{\mathds{E}S_m^j}{j!}z^j=\sum_{n=m}^\infty \dfrac{\mathds{E}S_m^{n-m}}{(n-m)!}z^n.
\end{equation*}
It therefore suffices to equate coefficients in this expression.

On the other hand, applying \eqref{eq1.i} with $z=qe^{i\theta}/m$ and recalling \eqref{eq2.4} and \eqref{eq2.6}, we see that
\begin{equation*}
    \begin{split}
        & \sum_{k=m}^\infty S(k,m)\left ( \dfrac{q}{m}\right )^ke^{i\theta k} = \left ( \dfrac{q}{m} \right )^m \dfrac{e^{i\theta m}}{\displaystyle\prod_{j=1}^m \left ( 1- qj/m\right )} \prod_{j=1}^m \dfrac{1-qj/m}{1-qje^{i\theta}/m}\\
         & = \left ( \dfrac{q}{m}\right )^m \dfrac{1}{\displaystyle\prod_{j=1}^m \left (1- qj/m \right)}\mathds{E}e^{i\theta (W_m(q)+m)}.
     \end{split}
\end{equation*}
Multiplying both sides of this identity by $e^{-i\theta n}/2\pi$ and then integrating over $[-\pi,\pi]$, we obtain the second equality in Theorem~\ref{th2}.

Finally, suppose that $m\geq 2$. Write \eqref{eq2.6} as
\begin{equation}\label{eq2.7}
    W_m(q)=Y_{m-1}+X(q),\qquad Y_{m-1}=X(q_1)+\cdots +X(q_{m-1}).
\end{equation}
Since $W_m(q)$ is an $\mathds{N}_0$-valued random variable, we have (see, for instance, Petrov \cite[p. 15]{Pet1995})
\begin{equation}\label{eq2.8}
    \begin{split}
       & \dfrac{1}{2\pi} \int_{-\pi}^\pi \mathds{E}e^{i\theta (W_m(q)-n+m)}\, d\theta=P(W_m(q)=n-m) \\
        & =\sum_{k=0}^{n-m}P(Y_{m-1}=k)P(X(q)=n-m-k)\\
        &=pq^{n-m}\sum_{k=0}^{n-m}\left (\dfrac{1}{q} \right )^k P(Y_{m-1}=k)
         = pq^{n-m}\mathds{E}\left ( \dfrac{1}{q}\right )^{Y_{m-1}}1_{[0,n-m]}(Y_{m-1})\\
        &=pq^{n-m}\mathds{E}\left ( \dfrac{1}{q}\right )^{Y_{m-1}} P\left ( X\left (1/m\right )+\cdots +  X\left ((m-1)/m\right )\leq n-m \right ),
    \end{split}
\end{equation}
where the last equality follows from \eqref{eq2.7} and Lemma~\ref{l1} with $a=q^{-1}$ and $f=1_{[0,n-m]}$. On the other hand, it follows from \eqref{eq2.4} and \eqref{eq2.7} that
\begin{equation*}
    \mathds{E}\left ( \dfrac{1}{q}\right )^{Y_{m-1}}=\prod_{j=1}^{m-1}\dfrac{1-qj/m}{1-j/m}=\dfrac{1}{(m-1)!}\prod_{j=1}^{m-1}(m-qj).
\end{equation*}
This, together with \eqref{eq2.8} and the second equality in Theorem~\ref{th2}, shows the third one and completes the proof.
\end{proof}

For other probabilistic representations of $S(n,m)$, we refer the reader to \cite{AdeCar2021} and \cite{AdeLek2019}.

\section{Estimates in the non-central regions}\label{s3}

Let $S_m$ be the random variable defined in \eqref{eq2.2}. By \eqref{eq2.1}, we have
\begin{equation}\label{eq3.9}
\begin{split}
    &\nu_m:=\mathds{E}S_m=\sum_{j=1}^m j=\dfrac{m(m+1)}{2},\\
    &\tau_m^2:=\text{Var}(S_m)=\sum_{j=1}^m j^2=\dfrac{m(m+1)(2m+1)}{6}.
\end{split}
\end{equation}

With these ingredients, our first main result is the following.

\begin{theorem}\label{th3}
Assume that
\begin{equation}\label{eq3.10}
    3\leq n-m\leq \dfrac{\nu_m}{2\tau_m}.
\end{equation}
Then,
\begin{equation*}
    \left | \dfrac{(n-m)!}{\nu_m^{n-m}}S(n,m)-1-\binom{n-m}{2} \dfrac{\tau_m^2}{\nu_m^2}\right |\leq 2e^{1/4}\left ( \dfrac{2(n-m)\tau_m}{\nu_m}\right )^3.
\end{equation*}
\end{theorem}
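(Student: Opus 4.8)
The plan is to start from the first representation in Theorem~\ref{th2}, $S(n,m)=\mathds{E}S_m^{n-m}/(n-m)!$, which turns the left-hand side into a single moment: writing $k:=n-m$,
\begin{equation*}
\frac{(n-m)!}{\nu_m^{n-m}}S(n,m)=\mathds{E}\left(\frac{S_m}{\nu_m}\right)^{k}=\mathds{E}(1+Z)^{k},\qquad Z:=\frac{S_m-\nu_m}{\nu_m}.
\end{equation*}
By \eqref{eq3.9} we have $\mathds{E}Z=0$ and $\mathds{E}Z^2=\tau_m^2/\nu_m^2$, and all moments of $S_m$ are finite since the $T_j$ have exponential tails. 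Because $k$ is a positive integer I would expand the power exactly and take expectations term by term, obtaining
\begin{equation*}
\mathds{E}(1+Z)^{k}=1+\binom{k}{2}\frac{\tau_m^2}{\nu_m^2}+R,\qquad R:=\sum_{j=3}^{k}\binom{k}{j}\mathds{E}Z^{j},
\end{equation*}
where the $j=1$ term vanishes. Thus the whole statement reduces to the bound $|R|\le 2e^{1/4}\big(2k\tau_m/\nu_m\big)^3$.

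The core of the argument is therefore an explicit bound on the centred moments $\mathds{E}Z^{j}=\nu_m^{-j}\mathds{E}(S_m-\nu_m)^{j}$, and for this I would exploit the fully explicit structure of $S_m=\sum_{i=1}^m iT_i$ from \eqref{eq2.2}. A short computation using \eqref{eq2.1} gives $\log\mathds{E}e^{t(S_m-\nu_m)}=\sum_{p\ge2}\frac{t^p}{p}\sum_{i=1}^m i^p$ for $0\le t<1/m$, so the $p$-th cumulant of $S_m$ equals $(p-1)!\sum_{i=1}^m i^p$. The elementary inequality $\sum_{i=1}^m i^p\le m^{p-2}\sum_{i=1}^m i^2=m^{p-2}\tau_m^2$ (for $p\ge2$, by \eqref{eq3.9}) shows these cumulants obey a Bernstein-type bound, from which one obtains the clean estimate $\mathds{E}e^{t(S_m-\nu_m)}\le (1-mt)^{-\tau_m^2/m^2}e^{-t\tau_m^2/m}$ on $0\le t<1/m$. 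Feeding this (and its reflection at $-t$) into the standard inequality $|x|^{j}\le j!\,t^{-j}(e^{tx}+e^{-tx})$ yields explicit bounds on $\mathds{E}|Z|^{j}$ that scale like $(\tau_m/\nu_m)^{j}$ times controlled factorial factors.

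With such moment bounds in hand I would sum the tail of $R$: bounding $\binom{k}{j}\le k^{j}/j!$ and inserting the estimate for $\mathds{E}|Z|^{j}$, each successive term gains a factor controlled by $k\tau_m/\nu_m$, which the hypothesis \eqref{eq3.10} forces to be at most $1/2$; summing the resulting geometric series leaves the $j=3$ term, of order $(k\tau_m/\nu_m)^3$, as the dominant contribution. I expect the main obstacle to be quantitative rather than conceptual, since the target bound is far from sharp: the delicate point is the upper tail of $S_m/\nu_m$. Indeed, in an integral (Taylor) form of the remainder the contribution $Z^3(1+Z)^{k-3}1_{\{Z\ge0\}}$ carries the factor $(1+Z)^{k-3}=(S_m/\nu_m)^{k-3}$, which is large on the upper tail; dominating $\mathds{E}[(S_m/\nu_m)^{\Theta(k)}]$ by an exponential is exactly where \eqref{eq3.10} is invoked a second time, and the auxiliary factor $\exp(c\,k^2\tau_m^2/\nu_m^2)\le e^{1/4}$ it produces, combined with the factorials from the exponential moments, is what pins the constant at $2e^{1/4}$ rather than merely the correct order of magnitude.
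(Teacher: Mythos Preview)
Your skeleton is exactly the paper's: use $S(n,m)=\mathds{E}S_m^{\,n-m}/(n-m)!$, expand binomially, and bound the tail $R=\sum_{j\ge 3}\binom{k}{j}(\tau_m/\nu_m)^j\mathds{E}Z_m^{\,j}$ through an exponential moment of the centred sum. The difference is only in how that exponential moment is reached, and your route is more roundabout than necessary.

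You propose to bound each $\mathds{E}|Z|^{j}$ separately via cumulant/Bernstein estimates together with $|x|^{j}\le j!\,t^{-j}(e^{tx}+e^{-tx})$, and then sum a geometric series with ratio $k/t$. With the natural choice $t=\nu_m/(2\tau_m)$ this ratio is $2k\tau_m/\nu_m$, which hypothesis~\eqref{eq3.10} only forces to be $\le 1$, so the geometric tail is not controlled at the boundary; the detour through cumulants also buys nothing here. The paper sidesteps this by \emph{not} bounding the moments individually. With the standardized $Z_m=(S_m-\nu_m)/\tau_m$ one writes
\[
\binom{k}{j}\Bigl(\tfrac{\tau_m}{\nu_m}\Bigr)^{j}\bigl|\mathds{E}Z_m^{\,j}\bigr|
\;\le\;\Bigl(\tfrac{2k\tau_m}{\nu_m}\Bigr)^{j}\,\dfrac{\mathds{E}(|Z_m|/2)^{j}}{j!}
\;\le\;\Bigl(\tfrac{2k\tau_m}{\nu_m}\Bigr)^{3}\,\dfrac{\mathds{E}(|Z_m|/2)^{j}}{j!},
\]
the last step using $2k\tau_m/\nu_m\le 1$. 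Now the sum over $j$ is simply $\sum_{j}\mathds{E}(|Z_m|/2)^{j}/j!\le \mathds{E}e^{|Z_m|/2}\le \mathds{E}e^{Z_m/2}+\mathds{E}e^{-Z_m/2}$, and each term is evaluated directly from the exponential MGF in~\eqref{eq2.1}: since $-\log(1-z)-z=z^{2}g(z)$ with $|g(z)|\le 1$ for $|z|\le 1/2$,
\[
\mathds{E}e^{\pm Z_m/2}=\prod_{j=1}^{m}\exp\!\Bigl(-\log\bigl(1\mp \tfrac{j}{2\tau_m}\bigr)\mp \tfrac{j}{2\tau_m}\Bigr)\le \exp\!\Bigl(\sum_{j=1}^{m}\tfrac{j^{2}}{4\tau_m^{2}}\Bigr)=e^{1/4}.
\]
This delivers the constant $2e^{1/4}$ in two lines. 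Your closing discussion of an integral Taylor remainder and the upper tail of $(S_m/\nu_m)^{k-3}$ is not needed.
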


\begin{proof}
Consider the standardized random variable
\begin{equation*}
    Z_m=\dfrac{S_m-\nu_m}{\tau_m},
\end{equation*}
which obviously satisfies $\mathds{E}Z_m=0$ and $\text{Var}(Z_m)=\mathds{E}Z_m^2=1$. We write
\begin{equation}\label{eq3.11}
    \begin{split}
       & \mathds{E}S_m^{n-m}=\mathds{E}(\tau_mZ_m+\nu_m)^{n-m}=\sum_{j=0}^{n-m}\binom{n-m}{j}\tau_m^j \mathds{E}Z_m^j\nu_m^{n-m-j} \\
        & =\nu_m^{n-m} \left ( 1+\binom{n-m}{2}\dfrac{\tau_m^2}{\nu_m^2}+\sum_{j=3}^{n-m} \binom{n-m}{j}\left ( \dfrac{\tau_m}{\nu_m}\right )^j\mathds{E}Z_m^j\right ).
    \end{split}
\end{equation}
By assumption \eqref{eq3.10}, the modulus of the sum on the right-hand side in \eqref{eq3.11} is bounded above by
\begin{equation}\label{eq3.12}
\begin{split}
    &\sum_{j=3}^{n-m}\left ( \dfrac{2(n-m)\tau_m}{\nu_m}\right )^j \dfrac{\mathds{E}(|Z_m|/2)^j}{j!}\leq \left (\dfrac{2(n-m)\tau_m}{\nu_m}\right)^3\mathds{E}e^{|Z_m|/2}\\
    &\leq \left (\dfrac{2(n-m)\tau_m}{\nu_m}\right )^3\left ( \mathds{E}e^{Z_m/2}+\mathds{E}e^{-Z_m/2}\right ).
\end{split}
\end{equation}

Using the identity
\begin{equation*}
    -\log (1-z)-z=z^2g(z),\quad |g(z)|\leq 1,\quad |z|\leq 1/2,
\end{equation*}
we have from \eqref{eq2.1}, \eqref{eq2.2}, and \eqref{eq3.9}
\begin{equation}\label{eq3.13}
    \begin{split}
    &\mathds{E}e^{Z_m/2}=\prod_{j=1}^m \exp \left ( \dfrac{j}{2\tau_m}(T_j-1)\right )\\
    &=\prod_{j=1}^m \exp \left ( -\log \left ( 1-\dfrac{j}{2\tau_m}\right )-\dfrac{j}{2\tau_m}\right )\leq \prod_{j=1}^m \exp \left ( \dfrac{j^2}{4\tau_m^2}\right )=e^{1/4}.
    \end{split}
\end{equation}
Similarly,
\begin{equation*}
    \mathds{E}e^{-Z_m/2}\leq e^{1/4}.
\end{equation*}
In view of the first equality in Theorem~\ref{th2}, the conclusion follows from \eqref{eq3.11}--\eqref{eq3.13}.
\end{proof}

The assumption $n-m\geq 3$ in \eqref{eq3.10} is not essential. Actually, if $n-m=1$ or $n-m=2$, the first equality in Theorem~\ref{th2} and \eqref{eq3.11} give us, respectively,
\begin{equation*}
    \dfrac{S(n,n-1)}{\nu_{n-1}}=1,\quad \dfrac{2}{\nu_{n-2}^2}S(n,n-2)=1+\dfrac{\tau_{n-2}^2}{\nu_{n-2}^2}.
\end{equation*}

Recall that the generalized harmonic numbers are defined as
\begin{equation}\label{eq3.13star}
    H_{m,r}=\sum_{j=1}^m \dfrac{1}{j^r},\quad m,r\in \mathds{N}.
\end{equation}
We simply denote by $H_m:=H_{m,1}$ the harmonic numbers. Our second main result is the following.

\begin{theorem}\label{th4}
We have
\begin{equation*}
    S(n,m)\leq \dfrac{m^n}{m!}.
\end{equation*}
In addition, if $m\geq 2$ and $n\geq mH_m$, then
\begin{equation}\label{eq3.14}
    \dfrac{m^n}{m!}\left ( 1-\dfrac{m^2H_{m,2}-mH_m}{(n-mH_m+1)^2}\right )\leq S(n,m)\leq \dfrac{m^n}{m!}.
\end{equation}
\end{theorem}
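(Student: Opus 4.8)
The upper bound $S(n,m)\le m^n/m!$ is immediate from the third equality in Theorem~\ref{th2}: since $S(n,m)=\frac{m^n}{m!}P(V_{m-1}\le n-m)$ with $V_{m-1}=X(1/m)+\cdots+X((m-1)/m)$, and any probability is at most $1$, the bound follows directly for $m\ge 2$; for $m=1$ it is the trivial identity $S(n,1)=1=1^n/1!$. The entire content is therefore the lower bound in~\eqref{eq3.14}, which amounts to a lower estimate for the tail $P(V_{m-1}\le n-m)$.

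The plan is to bound $P(V_{m-1}\le n-m)$ from below by controlling the complementary probability $P(V_{m-1}> n-m)$ via Markov's (or Chebyshev's) inequality. First I would compute the mean and variance of $V_{m-1}$ using~\eqref{eq2.4}. With $q_j=j/m$ so that $p_j=1-j/m=(m-j)/m$, one gets
\begin{equation*}
    \mathds{E}V_{m-1}=\sum_{j=1}^{m-1}\dfrac{q_j}{p_j}=\sum_{j=1}^{m-1}\dfrac{j}{m-j}=\sum_{k=1}^{m-1}\dfrac{m-k}{k}=mH_{m-1}-(m-1)=mH_m-m,
\end{equation*}
using $H_{m-1}=H_m-1/m$, and similarly
\begin{equation*}
    \text{Var}(V_{m-1})=\sum_{j=1}^{m-1}\dfrac{q_j}{p_j^2}=\sum_{j=1}^{m-1}\dfrac{jm}{(m-j)^2}=\sum_{k=1}^{m-1}\dfrac{m(m-k)}{k^2}=m^2H_{m-1,2}-mH_{m-1}.
\end{equation*}
Noting $H_{m-1,2}=H_{m,2}-1/m^2$ and $H_{m-1}=H_m-1/m$, this simplifies to $m^2H_{m,2}-mH_m$, which is exactly the numerator appearing in~\eqref{eq3.14}. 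So the key identification is $\mathds{E}V_{m-1}=mH_m-m$ and $\text{Var}(V_{m-1})=m^2H_{m,2}-mH_m$.

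With the mean and variance in hand, the remaining step is a one-sided Chebyshev estimate. Since $\mathds{E}V_{m-1}=mH_m-m$ and we assume $n\ge mH_m$, the target threshold $n-m$ exceeds the mean by $n-m-(mH_m-m)=n-mH_m\ge 0$. I would write
\begin{equation*}
    P(V_{m-1}> n-m)=P\bigl(V_{m-1}-\mathds{E}V_{m-1}> n-mH_m\bigr)\le \dfrac{\text{Var}(V_{m-1})}{(n-mH_m+1)^2},
\end{equation*}
where the $+1$ in the denominator comes from exploiting integer-valuedness: because $V_{m-1}$ takes values in $\mathds{N}_0$, the event $\{V_{m-1}>n-m\}$ coincides with $\{V_{m-1}\ge n-m+1\}$, i.e. $\{V_{m-1}-\mathds{E}V_{m-1}\ge n-mH_m+1\}$, sharpening the deviation by one unit. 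Then $P(V_{m-1}\le n-m)\ge 1-\text{Var}(V_{m-1})/(n-mH_m+1)^2$, and multiplying by $m^n/m!$ yields the stated lower bound.

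The main obstacle is the pair of harmonic-number simplifications: one must carefully carry out the index reversal $k=m-j$ and then reconcile $H_{m-1}$ and $H_{m-1,2}$ with $H_m$ and $H_{m,2}$ so that the variance lands precisely on $m^2H_{m,2}-mH_m$ rather than an approximate expression. The probabilistic half is routine once one remembers to use the sharper integer threshold $n-mH_m+1$ rather than $n-mH_m$ in Chebyshev's inequality; overlooking the integrality would give a weaker bound with $(n-mH_m)^2$ in the denominator and would not match the stated result.
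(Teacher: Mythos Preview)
Your proposal is correct and follows essentially the same route as the paper: both use the representation $S(n,m)=\frac{m^n}{m!}P(V_{m-1}\le n-m)$ from Theorem~\ref{th2}, compute $\mathds{E}V_{m-1}=mH_m-m$ and $\mathrm{Var}(V_{m-1})=m^2H_{m,2}-mH_m$ via~\eqref{eq2.4}, and bound the complementary probability $P(V_{m-1}\ge n-m+1)$ by Chebyshev's inequality with threshold $n-mH_m+1$. The only cosmetic difference is that the paper records the mean as $mH_{m-1}-m+1$ and converts to $n-mH_m+1$ in the Chebyshev step, whereas you simplify the mean first and explicitly invoke integer-valuedness to sharpen the threshold; the arguments are otherwise identical.
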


\begin{proof}
Since the first statement is an immediate consequence of the third equality in Theorem~\ref{th2}, we will only show the second one. In the setting of Theorem~\ref{th2}, denote
\begin{equation*}
    V_{m-1}=X\left ( 1/m\right )+\cdots +X\left ( (m-1)/m\right ),\quad m\geq 2.
\end{equation*}
By \eqref{eq2.4}, we see that
\begin{equation*}
    \mathds{E}V_{m-1}=\sum_{j=1}^{m-1}\dfrac{j/m}{1-j/m}=\sum_{j=1}^{m-1}\dfrac{m}{m-j}-m+1=mH_{m-1}-m+1,
\end{equation*}
as well as
\begin{equation}\label{eq3.14star}
\begin{split}
    &\text{Var}(V_{m-1})=\sum_{j=1}^{m-1}\dfrac{j/m}{(1-j/m)^2}\\
    &=\sum_{j=1}^{m-1}\dfrac{m^2}{(m-j)^2}-\sum_{j=1}^{m-1}\dfrac{m}{m-j}=m^2H_{m,2}-mH_m.
\end{split}
\end{equation}
Hence, by Chebyshev's inequality, we have
\begin{equation*}
    \begin{split}
        & P(V_{m-1}\geq n-m+1)=P(V_{m-1}-\mathds{E}V_{m-1}\geq n-mH_{m-1}) \\
         & \leq \dfrac{\text{Var}(V_{m-1})}{(n-mH_{m-1})^2}=\dfrac{m^2H_{m,2}-mH_m}{(n-mH_m+1)^2}.
     \end{split}
\end{equation*}
Thus, the conclusion follows from the third equality in Theorem~\ref{th2}.
\end{proof}

\section{Estimates in the central region}\label{s4}

In this section, we assume that $m\geq 2$. Looking at \eqref{eq2.6}, we choose $q=q(n,m)\in (0,1)$ the unique solution to the equation
\begin{equation}\label{eq4.15}
    n-m=\mathds{E}W_m(q)=\sum_{j=1}^m \dfrac{qj/m}{1-qj/m}=\sum_{j=1}^{m-1}\dfrac{qj/m}{1-qj/m}+\dfrac{q}{1-q},
\end{equation}
as follows from \eqref{eq2.4}. Note that this failure probability $q=q(n,m)$ is well defined, because the function $q\to \mathds{E}W_m(q)$, $0<q<1$, is strictly increasing and goes from 0 to $\infty$. With this choice, we denote
\begin{equation}\label{eq4.16}
    W_m(q)=W_{m-1}(q)+X(q),\quad W_{m-1}(q)=X(q_1)+\cdots + X(q_{m-1}).
\end{equation}
The variances of such random variables are denoted by
\begin{equation}\label{eq4.17}
    \sigma_m^2(q)=\text{Var} (W_m(q)),\quad \sigma_{m-1}^2=\text{Var}(W_{m-1}(q)).
\end{equation}
By \eqref{eq2.4} and \eqref{eq4.16}, we see that
\begin{equation}\label{eq4.18}
    \sigma_m^2=\sigma_{m-1}^2+\dfrac{q}{(1-q)^2},\quad \sigma_{m-1}^2=\sum_{j=1}^{m-1}\dfrac{qj/m}{(1-qj/m)^2}.
\end{equation}

With these notations, we state our third main result.

\begin{theorem}\label{th5}
Let $q$ be as in \eqref{eq4.15} and $p=1-q$. If $m\geq 2$, then
\begin{equation*}
    \left | \dfrac{\prod_{j=1}^m (m-qj)}{m^nq^{n-m}}S(n,m)-\dfrac{1}{\sigma_m \sqrt{2\pi}}\right | \leq \dfrac{2+9\sqrt{2\pi}}{2\pi p\sigma_m^2}+ \dfrac{6\sqrt{2}}{\pi p \sigma_{m-1}\sigma_m}.
\end{equation*}
\end{theorem}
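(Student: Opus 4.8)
The plan is to read off from the second equality in Theorem~\ref{th2} that the quantity inside the absolute value equals $P(W_m(q)=n-m)$, and to recall that, by the very choice of $q$ in \eqref{eq4.15}, one has $\mathds{E}W_m(q)=n-m$. Thus what must be proved is a \emph{local} central limit theorem: the probability mass of the integer-valued variable $W_m(q)$, evaluated exactly at its mean $n-m$, is close to the peak value $1/(\sigma_m\sqrt{2\pi})$ of the matching normal density, with an explicit error. I would work throughout with Fourier inversion. Writing $g(\theta)=\mathds{E}e^{i\theta(W_m(q)-n+m)}$ for the characteristic function of the centred variable, the same inversion formula for integer-valued laws used in \eqref{eq2.8} gives $P(W_m(q)=n-m)=\frac1{2\pi}\int_{-\pi}^{\pi}g(\theta)\,d\theta$, while the main term is written as a full-line Gaussian integral, $1/(\sigma_m\sqrt{2\pi})=\frac1{2\pi}\int_{-\infty}^{\infty}e^{-\sigma_m^2\theta^2/2}\,d\theta$. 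Note that evaluation \emph{at the mean} is what makes the clean main term legitimate: the first Edgeworth (skewness) correction is proportional to $H_3(0)=0$ and hence disappears, leaving an error of order $\sigma_m^{-2}$.

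Subtracting, the difference splits into $\frac1{2\pi}\int_{-\pi}^{\pi}\bigl(g(\theta)-e^{-\sigma_m^2\theta^2/2}\bigr)\,d\theta$ and a Gaussian tail $-\frac1{2\pi}\int_{|\theta|>\pi}e^{-\sigma_m^2\theta^2/2}\,d\theta$; the latter is exponentially small and is absorbed into the constants. For the former, the decisive move is to isolate the heaviest geometric summand through the decomposition \eqref{eq4.16}: by independence $g=g_{m-1}\,\tilde h$, where $g_{m-1}$ is the characteristic function of the centred $W_{m-1}(q)$ and, by \eqref{eq2.4} with $p=1-q$, $\tilde h(\theta)=e^{-i\theta q/p}\,\frac{p}{1-qe^{i\theta}}$ is that of the centred $X(q)$. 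Using $\sigma_m^2=\sigma_{m-1}^2+q/p^2$ from \eqref{eq4.18}, I factor the comparison Gaussian as $e^{-\sigma_m^2\theta^2/2}=e^{-\sigma_{m-1}^2\theta^2/2}\,e^{-q\theta^2/(2p^2)}$ and telescope:
\[
g-e^{-\sigma_m^2\theta^2/2}=\bigl(g_{m-1}-e^{-\sigma_{m-1}^2\theta^2/2}\bigr)\tilde h+e^{-\sigma_{m-1}^2\theta^2/2}\bigl(\tilde h-e^{-q\theta^2/(2p^2)}\bigr).
\]
Integrating the modulus of each summand over $[-\pi,\pi]$ is designed to produce, respectively, the two error terms in the statement.

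For the second summand I would expand $\log\tilde h(\theta)=-\frac{q}{2p^2}\theta^2+O(|\theta|^3)$, the cubic remainder carrying the third cumulant $q(1+q)/p^3$ of $X(q)$; then $|\tilde h-e^{-q\theta^2/(2p^2)}|\le e^{-q\theta^2/(2p^2)}|e^{\,\mathrm{remainder}}-1|$ is controlled by $|\theta|^3/p^3$ times a bounded envelope, and multiplying by $e^{-\sigma_{m-1}^2\theta^2/2}$ (so the two exponentials recombine to $e^{-\sigma_m^2\theta^2/2}$) and integrating gives a bound of order $\frac{1}{p\sigma_m^2}$, using $\sigma_m^2\ge q/p^2$ from \eqref{eq4.18}; this matches $\frac{2+9\sqrt{2\pi}}{2\pi p\sigma_m^2}$. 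For the first summand I would bound $|\tilde h(\theta)|=\frac{p}{\sqrt{p^2+4q\sin^2(\theta/2)}}$ explicitly and estimate $|g_{m-1}-e^{-\sigma_{m-1}^2\theta^2/2}|$ by a third-order (skewness) bound for the sum $W_{m-1}(q)$ against a Gaussian envelope. Here the cubic term does \emph{not} cancel, because $\tilde h$ is not symmetric; combining with the explicit decay of $|\tilde h|$ and integrating is arranged to yield order $\frac{1}{p\,\sigma_{m-1}\sigma_m}$, matching $\frac{6\sqrt2}{\pi p\sigma_{m-1}\sigma_m}$.

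The hard part will be the first summand: making the comparison $|g_{m-1}(\theta)-e^{-\sigma_{m-1}^2\theta^2/2}|$ effective over the \emph{whole} period $[-\pi,\pi]$, not merely for small $\theta$, and with explicit constants. Away from the origin a single geometric factor only decays to size $\asymp p$ rather than to $0$, so the smallness needed in the outer range must be extracted from the product structure of $g_{m-1}$ (equivalently, from the combined exponent $\sum_{j}\log\bigl(1+4q_j\sin^2(\theta/2)/p_j^2\bigr)$), and it is exactly this region that accounts for the factors $1/p$ and for the bound being informative only when $p\sigma_m^2$ is large. Keeping every constant explicit while retaining the correct orders $1/(p\sigma_m^2)$ and $1/(p\,\sigma_{m-1}\sigma_m)$ — in particular calibrating the skewness estimate for $W_{m-1}(q)$ so that it beats $\sigma_{m-1}^{-1}\sigma_m^{-1}$ rather than only $\sigma_{m-1}^{-2}$ — is the main technical burden.
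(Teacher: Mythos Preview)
Your overall framework is exactly the paper's: by the second equality in Theorem~\ref{th2} and the choice \eqref{eq4.15} the left-hand side is $P(W_m(q)=n-m)=\frac{1}{2\pi}\int_{-\pi}^{\pi}g(\theta)\,d\theta$ with $g(\theta)=\mathds{E}e^{i\theta(W_m(q)-\mathds{E}W_m(q))}$, and one must compare this with $\frac{1}{2\pi}\int_{\mathds{R}}e^{-\sigma_m^2\theta^2/2}\,d\theta=1/(\sigma_m\sqrt{2\pi})$. The difference lies in how the comparison is organized.

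The paper does \emph{not} telescope through the factor $\tilde h$ of the heaviest summand. Instead it splits the integration domain at the radius $|\theta|=p$. On the inner window $|\theta|\le p$ (Lemma~\ref{l7}) one writes the full cumulant expansion $g(\theta)=\exp\bigl(-\tfrac{\sigma_m^2}{2}\theta^2-ia_m(\theta)\theta^3\bigr)$ of Lemma~\ref{l6}, with the uniform bound $|a_m(\theta)|\le\sigma_m^2/(3p)$, and estimates $\int_{|\theta|\le p}e^{-\sigma_m^2\theta^2/2}\bigl(e^{-ia_m\theta^3}-1\bigr)\,d\theta$ together with the Gaussian tail $\int_{|\theta|>p}e^{-\sigma_m^2\theta^2/2}\,d\theta$; both land on $C/(p\sigma_m^2)$, producing the first error term. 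On the outer window $p\le|\theta|\le\pi$ (Lemma~\ref{l8}) one uses the exact modulus formula $|g(\theta)|=\prod_{j=1}^m\bigl(1+2(1-\cos\theta)q_j/p_j^2\bigr)^{-1/2}$, keeps only the \emph{cross terms} of the product via \eqref{eq4.27} to get $\prod_j(1+\cdots)\ge 2(1-\cos\theta)^2\sigma_{m-1}^2\sigma_m^2$, and with \eqref{eq4.28} this turns into $|g(\theta)|\le 12/(\theta^2\sigma_{m-1}\sigma_m)$, whose integral over $|\theta|\ge p$ gives the second error term. So the two displayed constants are attached, respectively, to ``inner'' and ``outer'' integration ranges, not to the two halves of your telescoping.

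Your decomposition $g-e^{-\sigma_m^2\theta^2/2}=(g_{m-1}-e^{-\sigma_{m-1}^2\theta^2/2})\tilde h+e^{-\sigma_{m-1}^2\theta^2/2}(\tilde h-e^{-q\theta^2/(2p^2)})$ is a legitimate alternative, but it does not deliver the stated error shape as cleanly. The difficulty is the second piece away from the origin: for $|\theta|$ of order larger than $p$, the single geometric factor satisfies only $|\tilde h(\theta)|\ge p/(1+q)$, so $|\tilde h-e^{-q\theta^2/(2p^2)}|$ does not inherit the Gaussian envelope $e^{-q\theta^2/(2p^2)}$, and the only damping left is $e^{-\sigma_{m-1}^2\theta^2/2}$. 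Integrating this over $|\theta|\ge p$ yields a contribution of order $1/(p\sigma_{m-1}^2)$, which in the regime $q/p^2\gg\sigma_{m-1}^2$ (where $\sigma_m\gg\sigma_{m-1}$) is strictly larger than both $1/(p\sigma_m^2)$ and $1/(p\sigma_{m-1}\sigma_m)$. Likewise, for the first piece the product structure you invoke is that of $g_{m-1}$, which naturally produces $\sigma_{m-2}\sigma_{m-1}$ in the denominator rather than $\sigma_{m-1}\sigma_m$. In short, isolating $\tilde h$ throws away exactly the variance contribution $q/p^2$ that the paper keeps inside both error terms; the inner/outer split at $|\theta|=p$ is what makes the pair $(\sigma_m,\sigma_{m-1})$ appear in the right combinations with explicit constants.
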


In Lemma~\ref{l9} below, we discuss the solution $q$ to equation \eqref{eq4.15} and give sharp estimates of $\sigma_{m-1}$. The applicability of Theorem~\ref{th5} is considered in Cases~3 and 4 in the following section.

The proof of Theorem~\ref{th5} is based upon the following three auxiliary results. In the first place, for any $k\in \mathds{N}$, let $\beta_k$ be a random variable having the beta density $\rho_k(x)=k(1-x)^{k-1}$, $0\leq x\leq 1$. The Taylor's formula with remainder in integral form can be written as
\begin{equation}\label{eq4.19}
    e^{z}=\sum_{j=0}^{k-1}\dfrac{z^j}{j!}+ \dfrac{z^k}{k!}\mathds{E}e^{z\beta_k},\quad k\in \mathds{N}.
\end{equation}
We always assume that any random variable $\beta_k$ is independent of all other random variables appearing in a same formula.

\begin{lemma}\label{l6}
In the setting of Theorem~\ref{th5}, we have for any $\theta\in \mathds{R}$
\begin{equation*}
    \mathds{E}e^{i\theta (W_m(q)-\mathds{E}W_m(q))}=\exp \left ( -\dfrac{\sigma_m^2 \theta^2 }{2}-ia_m(\theta)\theta^3\right ),
\end{equation*}
where
\begin{equation}\label{eq4.19.2}
    |a_m(\theta)|\leq \dfrac{\sigma_m^2}{3p}.
\end{equation}
\end{lemma}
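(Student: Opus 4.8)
The plan is to compute the characteristic function of the centered random variable $W_m(q)-\mathds{E}W_m(q)$ by exploiting the independence of the summands in \eqref{eq2.6}, and then control the deviation of its logarithm from the Gaussian term $-\sigma_m^2\theta^2/2$. Since $W_m(q)=\sum_{j=1}^m X(q_j)$ with $q_j=qj/m$, the characteristic function factorizes: writing $\mu_j=\mathds{E}X(q_j)$, I have
\begin{equation*}
    \mathds{E}e^{i\theta(W_m(q)-\mathds{E}W_m(q))}=\prod_{j=1}^m e^{-i\theta\mu_j}\,\mathds{E}e^{i\theta X(q_j)}
    =\prod_{j=1}^m e^{-i\theta\mu_j}\,\frac{p_j}{1-q_j e^{i\theta}},
\end{equation*}
using \eqref{eq2.4} with $p_j=1-q_j$. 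Taking the principal logarithm of each factor, the real part must reproduce $-\sigma_m^2\theta^2/2$ only up to the stated purely imaginary cubic correction, so the first step is to expand $\log\bigl(\mathds{E}e^{i\theta(X(q_j)-\mu_j)}\bigr)$ and identify its real and imaginary parts.

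The cleanest route is to use the Taylor representation \eqref{eq4.19} to expand $e^{i\theta X(q_j)}$ to third order. Specifically I would write $\mathds{E}e^{i\theta(X(q_j)-\mu_j)}=1-\tfrac{1}{2}\sigma_j^2\theta^2-\tfrac{i}{6}\theta^3\,\mathds{E}(X(q_j)-\mu_j)^3\,\mathds{E}e^{\,\cdots}$ schematically, where $\sigma_j^2=\operatorname{Var}(X(q_j))$ and the remainder carries the $\beta_k$-expectation from \eqref{eq4.19}. The essential structural fact to establish is that the second-order term is exactly $-\sigma_m^2\theta^2/2$ after multiplying across $j$ (since variances add), and that every higher correction is purely of the form $-ia_m(\theta)\theta^3$ — that is, the remainder beyond the quadratic term is purely imaginary when gathered into a single cubic factor. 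I would verify this by noting that for an integer-valued random variable the logarithm of the characteristic function has the form $-\tfrac{1}{2}\sigma^2\theta^2 + i(\text{odd part})$, and more precisely that after subtracting the Gaussian quadratic, what survives is captured by a single real function $a_m(\theta)$ multiplying $\theta^3$.

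The crux — and the main obstacle — is the bound \eqref{eq4.19.2}, namely $|a_m(\theta)|\le \sigma_m^2/(3p)$. Here I expect the argument to reduce to estimating a third-order quantity (morally $\tfrac{1}{6}\sum_j\mathds{E}|X(q_j)-\mu_j|^3$ or a closely related expression arising from the integral remainder) and comparing it against $\sigma_m^2$. For the geometric law, explicit moment formulas give the third central moment in terms of $q_j,p_j$, and the key inequality should come from the fact that $q_j\le q$ for all $j$, so that $p_j\ge p=1-q$; this is exactly where the factor $1/p$ in the bound originates. I would isolate the worst denominator $(1-q_j)$ across the $j=1,\dots,m$ summands, bound it below by $p$, and thereby factor out $\sigma_m^2/p$ from the cubic sum, with the numerical constant $1/3$ emerging from the $1/3!$ in \eqref{eq4.19} combined with a uniform bound on $|\mathds{E}e^{z\beta_3}|\le 1$ for the relevant purely imaginary argument. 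The delicate part is ensuring that the remainder really collapses into a single purely imaginary cubic term rather than leaving residual real contributions beyond the quadratic; this must be checked carefully by tracking real versus imaginary parts through the product over $j$, and it is the step I would spend the most care on.
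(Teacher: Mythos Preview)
Your proposal has two genuine gaps.

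First, you misread the statement: $a_m(\theta)$ is \emph{complex-valued}, not real. The term $-ia_m(\theta)\theta^3$ therefore has both real and imaginary parts, and the lemma is simply asserting that the log-characteristic function differs from the Gaussian quadratic $-\sigma_m^2\theta^2/2$ by something of modulus at most $\sigma_m^2|\theta|^3/(3p)$. Your concern that ``the remainder really collapses into a single purely imaginary cubic term'' is a red herring; in fact $\log|\phi_j(\theta)|$ is \emph{not} exactly $-\tfrac12\sigma_j^2\theta^2$ (see \eqref{eq4.29}), so there are real corrections beyond the quadratic and they are absorbed into the imaginary part of $a_m$.

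Second, and more seriously, your route ``expand $\phi_j(\theta)$ to third order, then take $\log$'' does not deliver the uniform-in-$\theta$ bound. Writing $\phi_j(\theta)=1-\tfrac12\sigma_j^2\theta^2+R_j(\theta)$ and then $\log(1+w)$ generates, beyond $R_j$, the term $-\tfrac12 w^2$ whose leading piece $\tfrac18\sigma_j^4\theta^4$ is real and, when divided by $\theta^3$, is unbounded as $|\theta|\to\infty$. You would also need $|w|<1$, which fails for large $\theta$. The paper avoids this entirely by never expanding $\phi_j$ and then taking a log. Instead it exploits the exact identity for the geometric law
\[
\log\phi_j(\theta)=-\log(1-q_je^{i\theta})+\log(1-q_j)-i\theta\,\frac{q_j}{p_j}
=\frac{1}{p_j}\,\mathds{E}\!\left[\frac{e^{i\theta X(q_j)}-1-i\theta X(q_j)}{X(q_j)}\right],
\]
obtained by summing the series $-\log(1-w)=\sum_{k\ge1}w^k/k$. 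Only \emph{after} this exact rewriting does one apply the Taylor remainder \eqref{eq4.19} to $e^{i\theta X(q_j)}$, which immediately yields
\[
\log\phi_j(\theta)=-\frac{\theta^2}{2}\,\frac{q_j}{p_j^2}\;-\;i\theta^3\cdot\frac{1}{6p_j}\,\mathds{E}\!\left[(X(q_j))^2 e^{i\theta X(q_j)\beta_3}\right],
\]
valid for \emph{all} $\theta\in\mathds{R}$ since $|e^{i\theta X(q_j)\beta_3}|=1$. Summing over $j$ gives the explicit $a_m(\theta)$, and the bound follows from $p_j\ge p$ together with $\mathds{E}(X(q_j))^2=q_j/p_j^2+q_j^2/p_j^2\le 2q_j/p_j^2$, so that $\sum_j\mathds{E}(X(q_j))^2\le 2\sigma_m^2$. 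Note that the controlling quantity is the \emph{raw second moment} $\mathds{E}(X(q_j))^2$, not the third absolute central moment you anticipated.
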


\begin{proof}
Let $q_j=qj/m$ and $p_j=1-q_j$, $j=1,\ldots ,m$. From \eqref{eq2.4}, we have
\begin{equation}\label{eq4.20}
    \mathds{E}e^{i\theta (X(q_j)-\mathds{E}X(q_j))}=\exp \left ( -\log (1-q_je^{i\theta})+\log (1-q_j)-i\theta \dfrac{q_j}{1-q_j}\right ).
\end{equation}
With the help of \eqref{eq4.19}, we write the exponent in \eqref{eq4.20} as
\begin{equation}\label{eq4.21}
    \begin{split}
       & \sum_{k=1}^\infty \dfrac{q_j^k}{k} (e^{i\theta k}-1)-i\theta \sum_{k=1}^\infty q_j^k=\dfrac{1}{p_j} \sum_{k=1}^\infty p_jq_j^k \dfrac{e^{i\theta k}-1-i\theta k}{k} \\
        & =\dfrac{1}{p_j}\mathds{E} \left (\dfrac{e^{i\theta X(q_j)}-1-i\theta X(q_j)}{X(q_j)}\right )\\
        &=\dfrac{1}{p_j}\left ( -\dfrac{\theta^2}{2}\mathds{E}X(q_j)- i\dfrac{\theta^3}{6}\mathds{E}(X(q_j))^2e^{i\theta X(q_j)\beta_3}\right )\\
        &=- \dfrac{\theta^2}{2}\dfrac{q_j}{p_j^2}- i\theta^3 \dfrac{1}{6p_j}\mathds{E}(X(q_j))^2e^{i\theta X(q_j)\beta_3}.
    \end{split}
\end{equation}
By \eqref{eq4.17}, \eqref{eq4.20}, and \eqref{eq4.21}, we get
\begin{equation}\label{eq4.22}
\begin{split}
    &\mathds{E}e^{i\theta (W_m(q)-\mathds{E}W_m(q))}=\prod_{j=1}^m \mathds{E}e^{i\theta (X(q_j)-\mathds{E}X(q_j))}\\
    &=\exp \left ( -\dfrac{\theta^2}{2}\sigma_m^2-i\theta^3 a_m(\theta)\right ),
\end{split}
\end{equation}
where
\begin{equation*}
    a_m(\theta)=\dfrac{1}{6} \sum_{j=1}^m \dfrac{1}{p_j}\mathds{E}(X(q_j))^2 e^{i\theta X(q_j)\beta_3}.
\end{equation*}
Since $p_j\geq p_m=p$, $j=1,\ldots ,m$, we have from \eqref{eq2.4} and \eqref{eq4.18}
\begin{equation*}
    \begin{split}
       & |a_m(\theta)|\leq \dfrac{1}{6p}\sum_{j=1}^m \mathds{E}(X(q_j))^2=\dfrac{1}{6p}\sum_{j=1}^m \left ( \text{Var}(X(q_j))+(\mathds{E}X(q_j))^2\right ) \\
        & =\dfrac{1}{6p}\sum_{j=1}^m \left ( \dfrac{q_j}{p_j^2}+\dfrac{q_j^2}{p_j^2}\right )\leq \dfrac{\sigma_m^2}{3p}.
    \end{split}
\end{equation*}
This, together with \eqref{eq4.22}, shows the result.
\end{proof}

Let $Z$ be a random variable having the standard normal density
\begin{equation}\label{eq4.23}
    \rho (u)=\dfrac{1}{\sqrt{2\pi}}e^{-u^2/2},\quad u\in \mathds{R}.
\end{equation}
The even moments of $Z$ are given by
\begin{equation}\label{eq4.24}
    \mathds{E}Z^{2k}=\dfrac{(2k)!}{k! 2^k},\quad k\in \mathds{N}_0.
\end{equation}

\begin{lemma}\label{l7}
In the setting of Theorem~\ref{th5}, we have
\begin{equation*}
    \left | \dfrac{1}{2\pi}\int_{|\theta|\leq p} \mathds{E}e^{i\theta(W_m(q)-\mathds{E}W_m(q))}\, d\theta- \dfrac{1}{\sigma_m\sqrt{2\pi}}\right | \leq \dfrac{2+9\sqrt{2\pi}}{2\pi p \sigma_m^2}.
\end{equation*}
\end{lemma}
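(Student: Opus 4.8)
The plan is to compare the truncated integral with the number $1/(\sigma_m\sqrt{2\pi})$, which is itself the full-line Gaussian integral $\frac{1}{2\pi}\int_{-\infty}^{\infty}e^{-\sigma_m^2\theta^2/2}\,d\theta$. First I would invoke Lemma~\ref{l6} to replace the characteristic function on $|\theta|\le p$ by $\exp(-\sigma_m^2\theta^2/2-ia_m(\theta)\theta^3)$, and then write the quantity to be estimated as the sum of an \emph{approximation error} $\frac{1}{2\pi}\int_{|\theta|\le p}e^{-\sigma_m^2\theta^2/2}\bigl(e^{-ia_m(\theta)\theta^3}-1\bigr)\,d\theta$ and minus the \emph{Gaussian tail} $\frac{1}{2\pi}\int_{|\theta|>p}e^{-\sigma_m^2\theta^2/2}\,d\theta$. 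These two contributions will account, respectively, for the $9\sqrt{2\pi}$ and the $2$ in the numerator of the claimed bound.

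For the approximation error I would use the elementary inequality $|e^{w}-1|\le|w|\,e^{|\mathrm{Re}\,w|}$ with $w=-ia_m(\theta)\theta^3$. The key structural point is that on the truncation range $|\theta|\le p$ one has $|\theta|^3\le p\,\theta^2$, so that the bound \eqref{eq4.19.2}, namely $|a_m(\theta)|\le\sigma_m^2/(3p)$, gives $|w|\le\sigma_m^2\theta^2/3$, and in particular $|\mathrm{Re}\,w|\le\sigma_m^2\theta^2/3$. Multiplying by the Gaussian weight therefore leaves the integrable factor $e^{-\sigma_m^2\theta^2/2+\sigma_m^2\theta^2/3}=e^{-\sigma_m^2\theta^2/6}$, and the approximation error is bounded by $\frac{1}{2\pi}\,\frac{\sigma_m^2}{3p}\int_{-\infty}^{\infty}|\theta|^3e^{-\sigma_m^2\theta^2/6}\,d\theta$. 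After the substitution $u=\sigma_m\theta/\sqrt{3}$ this integral becomes $\frac{9}{\sigma_m^4}\int_{-\infty}^{\infty}|u|^3e^{-u^2/2}\,du=\frac{9\sqrt{2\pi}}{\sigma_m^4}\,\mathds{E}|Z|^3$, and bounding the absolute third moment by the fourth moment, $\mathds{E}|Z|^3\le\mathds{E}Z^4=3$ (via \eqref{eq4.23} and \eqref{eq4.24}), produces exactly $9\sqrt{2\pi}/(2\pi p\sigma_m^2)$.

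For the Gaussian tail I would use the standard device $\int_p^{\infty}e^{-\sigma_m^2\theta^2/2}\,d\theta\le\frac1p\int_p^{\infty}\theta\,e^{-\sigma_m^2\theta^2/2}\,d\theta=\frac{1}{p\sigma_m^2}e^{-\sigma_m^2p^2/2}\le\frac{1}{p\sigma_m^2}$, so that the tail contributes at most $\frac{1}{\pi p\sigma_m^2}=2/(2\pi p\sigma_m^2)$. Adding the two estimates gives the asserted bound $\frac{2+9\sqrt{2\pi}}{2\pi p\sigma_m^2}$.

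The main obstacle is the complex cubic term. Since $a_m(\theta)$ is complex-valued, the correction $-ia_m(\theta)\theta^3$ has a nonzero real part, which a priori could overwhelm the Gaussian factor $e^{-\sigma_m^2\theta^2/2}$ and destroy integrability, and note that there is no uniform upper bound on $\sigma_m^2p^2$ in this setting. The resolution, and precisely the reason the cutoff is placed at $|\theta|=p$, is that this range turns \eqref{eq4.19.2} into the clean domination $|a_m(\theta)\theta^3|\le\sigma_m^2\theta^2/3<\sigma_m^2\theta^2/2$, so the exponent stays negative and the argument closes; making explicit this matching between the truncation level $p$ and the constant $1/(3p)$ supplied by Lemma~\ref{l6} is the crux of the proof.
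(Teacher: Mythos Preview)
Your argument is correct and mirrors the paper's proof: split into the Gaussian tail (giving $2/(p\sigma_m^2)$) and the cubic correction, then use $|\theta|\le p$ together with \eqref{eq4.19.2} to absorb $|a_m(\theta)\theta^3|\le\sigma_m^2\theta^2/3$ into the Gaussian weight, leaving $e^{-\sigma_m^2\theta^2/6}$ to integrate against $|\theta|^3$. The only cosmetic differences are that the paper writes the cruder $|e^z-1|\le|z|e^{|z|}$ and bounds $\mathds{E}|Z|^3$ via H\"older as $(\mathds{E}Z^4)^{3/4}=3^{3/4}$ before rounding $3\cdot 3^{3/4}=3^{7/4}$ up to $9$, whereas you go straight to $\mathds{E}|Z|^3\le 3$.
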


\begin{proof}
By \eqref{eq4.23} and Lemma~\ref{l6}, we have
\begin{equation}\label{eq4.25}
    \begin{split}
       & \int_{|\theta|\leq p}\mathds{E}e^{i\theta (W_m(q)-\mathds{E}W_m(q))}\, d\theta- \dfrac{\sqrt{2\pi}}{\sigma_m}\\
       &=\int_{|\theta|\leq p}e^{-\sigma_m^2\theta^2/2-ia_m(\theta)\theta^3}\, d \theta - \int_{-\infty}^\infty e^{-\sigma_m^2 \theta^2/2}\, d\theta\\
       &= \int_{|\theta|>p}e^{-\sigma_m^2 \theta^2/2}\, d\theta+ \int_{|\theta|\leq p} e^{-\sigma_m^2 \theta^2/2} \left ( e^{-ia_m(\theta)\theta^3}-1\right )\, d\theta:=I+II.
    \end{split}
\end{equation}
Observe that
\begin{equation}\label{eq4.26}
    I\leq \dfrac{2}{p}\int_p^\infty \theta e^{-\sigma_m^2 \theta^2/2}\, d\theta=\dfrac{2}{p\sigma_m^2}e^{-(p\sigma_m)^2/2}\leq \dfrac{2}{p\sigma_m^2}.
\end{equation}
On the other hand, using the inequality $|e^z-1|\leq |z|e^{|z|}$, we have from \eqref{eq4.19.2}
\begin{equation*}
    \begin{split}
        & |II|\leq \dfrac{\sigma_m^2}{3p}\int_{|\theta|\leq p}|\theta|^3 e^{-\sigma_m^2 \theta^2 /2}\, d\theta\leq \dfrac{3\sqrt{2\pi}}{p\sigma_m^2} \mathds{E}|Z|^3 \\
         & \leq\dfrac{3\sqrt{2\pi}}{p\sigma_m^2} (\mathds{E}Z^4)^{3/4}=\dfrac{3^{7/4}\sqrt{2\pi}}{p\sigma_m^2}\leq \dfrac{9\sqrt{2\pi}}{p\sigma_m^2}.
     \end{split}
\end{equation*}
where we have used \eqref{eq4.24} and Hölder's inequality. This, in conjunction with \eqref{eq4.25} and \eqref{eq4.26}, shows the result.
\end{proof}

In the following auxiliary result, we will need the identity
\begin{equation}\label{eq4.27}
    \left ( \sum_{j=1}^m x_j\right )^2=\sum_{j=1}^m x_j^2+2\sum_{1\leq i <j\leq m}x_ix_j,\quad x_1,\ldots ,x_m\in \mathds{R},
\end{equation}
as well as the inequality
\begin{equation}\label{eq4.28}
    1-\cos \theta\geq \dfrac{\theta^2}{2}\left ( 1-\dfrac{\theta^2}{12}\right )\geq \dfrac{\theta^2}{2}\left ( 1-\dfrac{\pi^2}{12}\right )\geq \dfrac{\theta^2}{12},\quad |\theta|\leq \pi.
\end{equation}

\begin{lemma}\label{l8}
In the setting of Theorem~\ref{th5}, we have
\begin{equation*}
    \dfrac{1}{2\pi}\int_{p\leq |\theta|\leq \pi}\left | \mathds{E}e^{i\theta (W_m(q)-\mathds{E}W_m(q))}\right |\, d\theta\leq \dfrac{6\sqrt{2}}{\pi p \sigma_{m-1}\sigma_m}.
\end{equation*}
\end{lemma}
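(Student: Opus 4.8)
The plan is to bound the characteristic function $|\mathds{E}e^{i\theta(W_m(q)-\mathds{E}W_m(q))}|$ uniformly over the range $p\leq|\theta|\leq\pi$ and then integrate. Since $W_m(q)$ is a sum of independent geometric variables $X(q_j)$ with $q_j=qj/m$, the modulus of its characteristic function factors as $\prod_{j=1}^m|\mathds{E}e^{i\theta X(q_j)}|$. From the generating function in \eqref{eq2.4}, each factor is $p_j/|1-q_je^{i\theta}|$ with $p_j=1-q_j$, and a direct computation gives
\begin{equation*}
    \left|\mathds{E}e^{i\theta X(q_j)}\right|^2=\dfrac{p_j^2}{|1-q_je^{i\theta}|^2}=\dfrac{p_j^2}{p_j^2+2q_j(1-\cos\theta)}=\dfrac{1}{1+\dfrac{2q_j}{p_j^2}(1-\cos\theta)}.
\end{equation*}

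First I would use the elementary bound $(1+x)^{-1}\leq e^{-x/(1+x)}$ or, more simply, $(1+x)^{-1/2}\leq e^{-x/2+x^2/\cdots}$; but the cleaner route is to note $\prod_j(1+c_j)^{-1}\leq\bigl(1+\sum_j c_j\bigr)^{-1}$ is false in general, so instead I would keep the product and relate it to the variance. Recalling from \eqref{eq4.18} that $\sigma_m^2=\sum_{j=1}^m q_j/p_j^2=\operatorname{Var}(W_m(q))$, the exponent $\sum_j \tfrac{2q_j}{p_j^2}(1-\cos\theta)=2\sigma_m^2(1-\cos\theta)$ appears naturally if one passes to logarithms. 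The key inequality is therefore to show, via $\log(1+x)\geq x/(1+x)$ or a comparable bound together with \eqref{eq4.28}, that
\begin{equation*}
    \left|\mathds{E}e^{i\theta(W_m(q)-\mathds{E}W_m(q))}\right|\leq\exp\left(-c\,\sigma_m^2\theta^2\right)
\end{equation*}
for a suitable constant $c$ on the relevant range, or to split off one factor. I anticipate the cleanest argument splits the product as $|\mathds{E}e^{i\theta X(q)}|\cdot\prod_{j=1}^{m-1}|\mathds{E}e^{i\theta X(q_j)}|$, using the second factor to produce a $\sigma_{m-1}$-decay and the first to produce an extra $\sigma_m$ (or $p$) factor, which explains why both $\sigma_{m-1}$ and $\sigma_m$ appear in the stated bound.

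Next I would invoke \eqref{eq4.28}, which converts $1-\cos\theta\geq\theta^2/12$ on $|\theta|\leq\pi$ into a genuine Gaussian-type decay $\exp(-\sigma_{m-1}^2\theta^2/12)$ (or similar) for the truncated product over $j=1,\ldots,m-1$, while the single remaining factor $p/|1-qe^{i\theta}|\leq p/(p^2+2q(1-\cos\theta))^{1/2}$ is bounded crudely but usefully, contributing the $1/(p\sigma_m)$ scale. Then I would integrate the resulting bound $\int_{p\leq|\theta|\leq\pi}\exp(-c\sigma_{m-1}^2\theta^2)\,d\theta$ against the leftover factor; extending the Gaussian integral to all of $\mathds{R}$ gives $\int e^{-c\sigma_{m-1}^2\theta^2}\,d\theta=\sqrt{\pi/(c\sigma_{m-1}^2)}$, producing the $1/\sigma_{m-1}$ dependence, and combining with the $1/(p\sigma_m)$ factor yields the announced $6\sqrt{2}/(\pi p\sigma_{m-1}\sigma_m)$ after tracking constants through identity \eqref{eq4.27} and the numeric inequalities. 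The main obstacle, and where the explicit constant $6\sqrt{2}$ must be earned, is precisely the bookkeeping: deciding how to allocate the decay between the product over the first $m-1$ factors (to get $\sigma_{m-1}$) and the last factor (to get $\sigma_m$ together with the $1/p$ from $p_m=p$ being the smallest success probability), while keeping all constants sharp enough that the final bound holds for every $m\geq 2$ rather than only asymptotically.
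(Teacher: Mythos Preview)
Your derivation of the product formula
\begin{equation*}
\left|\mathds{E}e^{i\theta(W_m(q)-\mathds{E}W_m(q))}\right|^2=\prod_{j=1}^{m}\dfrac{1}{1+2(1-\cos\theta)q_j/p_j^2}
\end{equation*}
matches the paper's \eqref{eq4.29}, but the plan diverges from there and has a genuine gap. The Gaussian-type bound $\prod_{j\le m-1}(1+x_j)^{-1/2}\le\exp(-c\,\sigma_{m-1}^2\theta^2)$ that you aim for is not available on the whole range $p\le|\theta|\le\pi$. Since $\log(1+x)\le x$, the inequality $(1+x)^{-1/2}\le e^{-x/2}$ goes the wrong way; the usable bound $\log(1+x)\ge x/(1+x)$ gives only $(1+x_j)^{-1/2}\le e^{-x_j/(2(1+x_j))}$, and here $x_j=2(1-\cos\theta)q_j/p_j^2$ need not be small (for $\theta$ of order~$1$ and $p$ small, many $x_j$ are large), so the exponent saturates and no $\sigma_{m-1}^2\theta^2$ decay emerges. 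Likewise, the single factor $p/|1-qe^{i\theta}|$ does not by itself produce a $1/(p\sigma_m)$ contribution; it is at most $1$.

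The paper's argument avoids exponentials altogether and extracts a \emph{polynomial} decay in $\theta$. One lower-bounds the full product $\prod_{j=1}^{m}(1+x_j)$ by its degree-two elementary symmetric part $\sum_{i<j}x_ix_j$, then uses \eqref{eq4.27} with $x_j=q_j/p_j^2$ together with $\max_j q_j/p_j^2=q/p^2$ and \eqref{eq4.18} to obtain
\begin{equation*}
\sum_{1\le i<j\le m}\dfrac{q_i}{p_i^2}\dfrac{q_j}{p_j^2}\ge\dfrac{1}{2}\,\sigma_m^2\Bigl(\sigma_m^2-\dfrac{q}{p^2}\Bigr)=\dfrac{1}{2}\,\sigma_m^2\sigma_{m-1}^2.
\end{equation*}
Combined with \eqref{eq4.28} this yields $|\mathds{E}e^{i\theta(W_m(q)-\mathds{E}W_m(q))}|\le 12/(\sqrt{2}\,\theta^{2}\sigma_m\sigma_{m-1})$. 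The factor $1/p$ in the final bound does not come from the $j=m$ term of the product at all: it arises from integrating $1/\theta^{2}$ over $[p,\pi]$. So the roles of $\sigma_{m-1}$, $\sigma_m$ and $p$ are produced quite differently from what you anticipated, and the constant $6\sqrt{2}/\pi$ drops out directly from $12/(2\pi\sqrt{2})$.
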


\begin{proof}
We first claim that
\begin{equation}\label{eq4.29}
    \left | \mathds{E}e^{i\theta (W_m(q)-\mathds{E}W_m(q))}\right |= \prod_{j=1}^m \dfrac{1}{\sqrt{1+2(1-\cos\theta)q_j/p_j^2}}.
\end{equation}
In fact, we have from \eqref{eq2.4}
\begin{equation*}
    \left | \mathds{E}e^{i\theta (X(q_j)-\mathds{E}X(q_j))}\right |^2=\left | \mathds{E}e^{i\theta X(q_j)}\right |^2=\dfrac{p_j^2}{|1-q_je^{i\theta}|^2}=\dfrac{1}{1-2(1-\cos \theta)q_j/p_j^2},
\end{equation*}
which implies \eqref{eq4.29}. On the other hand, choosing $x_j=q_j/p_j^2$ in \eqref{eq4.27} and recalling \eqref{eq4.18}, we see that
\begin{equation*}
    \sigma_m^4\leq \dfrac{q}{p^2}\sigma_m^2+2\sum_{1\leq i<j\leq m}\dfrac{q_i}{p_i^2}\dfrac{q_j}{p_j^2},
\end{equation*}
since $q_j/p_j^2\leq q/p^2$, $j=1,\ldots ,m$. We therefore have from \eqref{eq4.18}
\begin{equation*}
    \begin{split}
        & \prod_{j=1}^m (1+2(1-\cos \theta)q_j/p_j^2)\geq 4(1-\cos \theta)^2\sum_{1\leq i <j\leq m}\dfrac{q_i}{p_i^2}\dfrac{q_j}{p_j^2} \\
         & \geq 2(1-\cos \theta)^2 \sigma_m^2 \left ( \sigma_m^2-\dfrac{q}{p^2}\right )= 2(1-\cos \theta)^2 \sigma_m^2 \sigma_{m-1}^2 \geq 2 \left ( \dfrac{\theta^2 \sigma_m \sigma_{m-1}}{12}\right )^2,
\end{split}
\end{equation*}
where the last inequality follows from \eqref{eq4.28}. By \eqref{eq4.29}, this implies that
\begin{equation*}
    \int_{p\leq |\theta|\leq \pi} \left | \mathds{E}e^{i\theta (W_m(q)-\mathds{E}W_m(q))}\right |\, d\theta\leq \dfrac{24}{\sigma_{m-1}\sigma_m \sqrt{2}}\int_p^\pi \dfrac{1}{\theta^2}\,d\theta\leq \dfrac{12\sqrt{2}}{p\sigma_{m-1}\sigma_m}.
\end{equation*}
The proof is complete.
\end{proof}

\begin{proof}[Proof of Theorem~\ref{th5}]
In view of the second equality in Theorem~\ref{th2}, Theorem~\ref{th5} is an immediate consequence of Lemmas~\ref{l7} and \ref{l8}.
\end{proof}

\section{Discussion}\label{s5}

In this section, we discuss which one of the Theorems~\ref{th3}, \ref{th4}, or \ref{th5} should be applied to estimate $S(n,m)$, as $n\to \infty$, depending on the values of $m$. To this end, we will need the following auxiliary result.

\begin{lemma}\label{l9}
Let $q$ and $\sigma_{m-1}^2$ be as in \eqref{eq4.15} and \eqref{eq4.17}, respectively, and let $p=1-q$. Then,
\begin{equation}\label{eq5.30}
    \dfrac{1}{mp}-\dfrac{1}{q}\log \left ( 1-q\dfrac{m-1}{m}\right )\leq \dfrac{n}{m}\leq \dfrac{1}{mp}-\dfrac{1}{q}\log (1-q)
\end{equation}
and
\begin{equation}\label{eq5.31}
    \dfrac{m(m-1)}{mp+q}+\dfrac{m}{q}\log \left ( \dfrac{(m-1)p+1}{m}\right )\leq \sigma_{m-1}^2\leq \dfrac{m}{p}+\dfrac{m}{q}\log \left ( \dfrac{mp}{m-q}\right ).
\end{equation}
\end{lemma}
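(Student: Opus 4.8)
The plan is to reduce each of the two double inequalities to the elementary comparison of a sum of a monotone function with its integral. First I would rewrite the quantities $n/m$ and $\sigma_{m-1}^2$ in closed form. Starting from \eqref{eq4.15} and using the identity $qj/(m-qj)=m/(m-qj)-1$, summation over $j$ gives
\[
    \dfrac{n}{m}=\sum_{j=1}^m \dfrac{1}{m-qj}=\dfrac{1}{mp}+\sum_{j=1}^{m-1}\dfrac{1}{m-qj},
\]
where the term $j=m$ has been separated (recall $m-qm=mp$). Similarly, from the second identity in \eqref{eq4.18} and the decomposition $qjm/(m-qj)^2=m^2/(m-qj)^2-m/(m-qj)$, I would write $\sigma_{m-1}^2=\sum_{j=1}^{m-1}g(j)$ with $g(x)=m^2/(m-qx)^2-m/(m-qx)$, whose antiderivative $G(x)=m^2/(q(m-qx))+(m/q)\log(m-qx)$ is explicit.

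Next I would note that $f(x)=1/(m-qx)$ and $g(x)$ are strictly increasing on $[0,m-1]$, since $0<q<1$ keeps $m-qx>0$ there while the denominators decrease. For an increasing function $\phi$ one has $\int_{j-1}^j\phi\le \phi(j)\le\int_j^{j+1}\phi$, and summing over $j=1,\ldots,m-1$ yields $\int_0^{m-1}\phi\le\sum_{j=1}^{m-1}\phi(j)\le\int_1^m\phi$. Applying this to $f$ and to $g$ and computing the elementary integrals is the heart of the proof. For the lower bounds the integrals match the claimed expressions on the nose: $\int_0^{m-1}f=-\tfrac{1}{q}\log(1-q(m-1)/m)$ produces the left-hand side of \eqref{eq5.30}, while a short simplification of $G(m-1)-G(0)$, using $(m-1)p+1=mp+q$ and $m-mp-q=q(m-1)$, yields exactly the left-hand side of \eqref{eq5.31}.

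For the upper bounds the same comparison gives slightly sharper quantities, which I would then relax to the stated form. On the one hand, $\int_1^m f=\tfrac{1}{q}\log((m-q)/(mp))\le-\tfrac{1}{q}\log(1-q)$ because $m-q\le m$, which gives the right-hand side of \eqref{eq5.30}. On the other hand, $G(m)-G(1)=m(m-1)/(p(m-q))+(m/q)\log(mp/(m-q))$, and since $m-1\le m-q$ the first term is at most $m/p$, producing the right-hand side of \eqref{eq5.31}.

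I expect the main obstacle to be purely computational bookkeeping rather than any conceptual difficulty: one must carry out the exact algebraic cancellations that turn $G(m-1)-G(0)$ and $G(m)-G(1)$ into the prescribed shapes, and check that each final relaxation ($m-q\le m$ and $m-1\le m-q$) points in the direction needed. Verifying the monotonicity of $f$ and $g$, which legitimizes the integral comparison, is the only structural point that must be secured beforehand.
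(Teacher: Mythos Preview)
Your proposal is correct and follows essentially the same approach as the paper: both proofs separate the $j=m$ term in \eqref{eq4.15}, then bound the remaining sums $\sum_{j=1}^{m-1}$ of the increasing functions $f$ and $g$ by the integrals $\int_0^{m-1}$ and $\int_1^m$, and finally relax the upper integrals via $m-q\le m$ and $m-1\le m-q$ to reach the stated forms. The only cosmetic difference is that the paper works with $f(x)=(1-qx/m)^{-1}$ and $g(x)=\tfrac{qx/m}{(1-qx/m)^2}$ (your functions multiplied by $m$), which cancels against the extra factor $1/m$ in its version of \eqref{eq5.32}; you also make the two final relaxations explicit, whereas the paper absorbs them into the phrase ``this implies \eqref{eq5.30}/\eqref{eq5.31}.''
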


\begin{proof}
By definition \eqref{eq4.15}, we have
\begin{equation*}
    n-m=\sum_{j=1}^m \dfrac{1}{1-qj/m}-m,
\end{equation*}
thus implying that
\begin{equation}\label{eq5.32}
    \dfrac{n}{m}=\dfrac{1}{mp}+\dfrac{1}{m}\sum_{j=1}^{m-1}\dfrac{1}{1-qj/m}.
\end{equation}
Since the function $f(x)=(1-qx/m)^{-1}$, $0\leq x\leq m$, is increasing, we get
\begin{equation*}
    \begin{split}
        & -\dfrac{m}{q}\log \left ( 1-q\dfrac{m-1}{m}\right )=\int_0^{m-1}f(x)\, dx\leq \sum_{j=1}^{m-1}\dfrac{1}{1-qj/m} \\
         & \leq \int_1^m f(x)\, dx=-\dfrac{m}{q}\log \left ( 1- q\dfrac{m-1}{m-q}\right ),
     \end{split}
\end{equation*}
which, in conjunction with \eqref{eq5.32}, implies \eqref{eq5.30}. On the other hand, the function
\begin{equation*}
    g(x)=\dfrac{qx/m}{(1-qx/m)^2}=\dfrac{1}{(1-qx/m)^2}-\dfrac{1}{1-qx/m},\quad 0\leq x\leq m,
\end{equation*}
is increasing as well. Therefore,
\begin{equation*}
    \begin{split}
        & \dfrac{m(m-1)}{mp+q}+\dfrac{m}{q}\log \left (1-q\dfrac{m-1}{m} \right )= \int_0^{m-1}g(x)\, dx\leq \sum_{j=1}^{m-1}\dfrac{qj/m}{(1-qj/m)^2} \\
         & \leq \int_1^m g(x)\, dx= \dfrac{n(m-1)}{(m-q)p}+\dfrac{m}{q}\log \left ( 1-q\dfrac{m-1}{m-q}\right ).
     \end{split}
\end{equation*}
In view of \eqref{eq4.18}, this implies \eqref{eq5.31} and completes the proof.
\end{proof}

In the first place, note that if $m$ is bounded then Theorem~\ref{th4} gives us an estimate of
\begin{equation}\label{eq5.33}
    \dfrac{m!}{m^n} S(n,m)
\end{equation}
with main term 1 and remainder term of the order of $1/n^2$.

From now on, assume that $m$ tends to infinity. We use the notation $x_m\sim y_m$ if
\begin{equation*}
    0<\varliminf_{m\to \infty}\dfrac{x_m}{y_m}\leq \varlimsup_{m\to \infty} \dfrac{x_m}{y_m}<\infty.
\end{equation*}
The following two cases are concerned with estimates in the non-central regions.

\noindent \textbf{Case 1}: $n-m\leq m^{\alpha}$, for some $0<\alpha<1/2$. Under this assumption, condition \eqref{eq3.10} is fulfilled, since $\nu_m/\tau_m\sim m^{1/2}$. Therefore, we apply Theorem~\ref{th3} to estimate
    \begin{equation*}
        \dfrac{(n-m)!}{\nu_m^{n-m}}S(n,m)
    \end{equation*}
    with main term 1 and remainder term of the order of
    \begin{equation*}
        \left (\dfrac{(n-m)\tau_m}{\nu_m}\right )^3\sim \left ( \dfrac{n-m}{\sqrt{m}}\right )^3.
    \end{equation*}

More generally, suppose that $n-m=o(\sqrt{m})$ or, equivalently, $n-m=o(\sqrt{n})$. In such a case, the first equality in Theorem~\ref{th2} and formula \eqref{eq3.11} allows us to give the exact expression
\begin{equation}\label{eq5.33.3}
\begin{split}
    &\dfrac{(n-m)!}{\nu_{m}^{n-m}}S(n,m)=1+\binom{n-m}{2} \left ( \dfrac{\tau_m}{\nu_m}\right )^2\\
    &+\sum_{j=3}^{n-m}\binom{n-m}{j}\left ( \dfrac{\tau_m}{\nu_m}\right )^j\mathds{E}Z_m^j.
    \end{split}
\end{equation}
This is essentially formula \eqref{eq1.f} shown by Moser and Wyman \cite{MosWym1958}. Note that the $j$-term in \eqref{eq5.33.3} has a coefficient depending on $\mathds{E}Z_m^j$ and order of magnitude $(o(\sqrt{m})/\sqrt{m})^j$.

\noindent \textbf{Case 2}: $n-m\geq m\log (mR_m)$, with $R_m\to \infty$, as $m\to \infty$. In this case, we use Theorem~\ref{th4} to estimate \eqref{eq5.33} with main term 1 and remainder term of the order of
\begin{equation}\label{eq5.33.2}
    \left ( \dfrac{m}{n-mH_m} \right )^2
\end{equation}
Since $H_m\sim \log m$, the order of magnitude of \eqref{eq5.33.2} is at least $(\log R_m)^{-2}$.

In the following two cases, we apply Theorem~\ref{th5} to give estimates in the central region.

\noindent \textbf{Case 3}: $m^\alpha\leq n-m\leq m\log (mr_m)$, where $0<\alpha <1/2$, and $r_m\to 0$ and $mr_m\to \infty$, as $m\to \infty$. Equivalently,
\begin{equation}\label{eq5.34}
    1+\dfrac{1}{m^{1-\alpha}}\leq \dfrac{n}{m}\leq 1+ \log (mr_m).
\end{equation}
In view of \eqref{eq5.30}, we consider the increasing function
\begin{equation*}
    \varphi (q)=\dfrac{1}{mp}-\dfrac{1}{q}\log (1-q),\quad 0<q<1.
\end{equation*}
Note that
\begin{equation*}
    \varphi (q)\sim 1+\dfrac{q}{2},\ q\to 0,\quad \varphi(q) \sim \dfrac{1}{mp}-\log (1-q),\ q\to 1,
\end{equation*}
thus implying that
\begin{equation}\label{eq5.35}
\begin{split}
    &\varphi \left ( \dfrac{2}{m^{1-\alpha}}\right )\sim 1+\dfrac{1}{m^{1-\alpha}},\\
    &\varphi \left ( 1-\dfrac{1}{mr_m}\right )\sim r_m+\log (mr_m)\sim\log (mr_m),
\end{split}
\end{equation}
thanks to the assumptions on $r_m$. As a consequence of \eqref{eq5.34} and \eqref{eq5.35}, we see that
\begin{equation}\label{eq5.36}
    \dfrac{1}{mr_m}\lesssim p \lesssim 1-\dfrac{2}{m^{1-\alpha}}.
\end{equation}
Let us consider the extreme cases in \eqref{eq5.36}. Suppose that $p\sim1/mr_m$. From \eqref{eq4.18} and \eqref{eq5.31}, we have
\begin{equation*}
    \sigma_{m-1}^2\sim m^2 r_m,\quad \sigma_m^2\sim m^2 r_m^2+m^2 r_m\sim m^2 r_m\sim \sigma_{m-1}^2.
\end{equation*}
Therefore, we apply Theorem~\ref{th5} to estimate
\begin{equation}\label{eq5.37}
    \dfrac{\prod_{j=1}^m (m-qj)}{m^nq^{n-m}}S(n,m)
\end{equation}
with main term of the order of $1/\sigma_m\sim 1/m\sqrt{r_m}$ and a remainder term of the order of
\begin{equation*}
    \dfrac{1}{p\sigma_m^2}\sim \dfrac{1}{p\sigma_m\sigma_{m-1}}\sim \dfrac{1}{m}.
\end{equation*}
Suppose that $p\sim 1- 2/m^{1-\alpha}$. Again by \eqref{eq4.18} and \eqref{eq5.31}, we have
\begin{equation*}
    \sigma_m^2\sim\sigma_{m-1}^2 \sim m.
\end{equation*}
Thus, we use again Theorem~\ref{th5} to estimate \eqref{eq5.37} with a main term of the order of $1/\sigma_m\sim 1/\sqrt{m}$ and a remainder terms of the order of
\begin{equation*}
    \dfrac{1}{p\sigma_m^2}\sim\dfrac{1}{p\sigma_m\sigma_{m-1}}\sim \dfrac{1}{m}.
\end{equation*}
Finally, for $p$ in the range given by \eqref{eq5.36}, we have a similar discussion: we use Theorem~\ref{th5} to estimate \eqref{eq5.37}. The order of the main term may change, but the remainder term has always the order of magnitude of $1/m$.

\noindent \textbf{Case 4}: $n-m\sim m\log (mr_m)$, where $r_m\sim r$, for some $0<r<\infty$. Proceeding as in Case~3, we have
\begin{equation*}
    p\sim\dfrac{1}{m},\quad \sigma_m^2 \sim \sigma_{m-1}^2\sim m^2.
\end{equation*}
We apply Theorem~\ref{th5} to estimate \eqref{eq5.37} with the following difference: the main term and the remainder term have the same order of magnitude $1/m$. As a consequence, Theorem~\ref{th5} only gives us in this case upper and lower bounds for \eqref{eq5.37} having the same order of magnitude, since
\begin{equation*}
    \dfrac{1}{\sigma_m}\sim \dfrac{1}{\sigma_{m-1}}\sim \dfrac{1}{p\sigma_m^2}\sim \dfrac{1}{p\sigma_m\sigma_{m-1}}\sim \dfrac{1}{m}.
\end{equation*}

\noindent \textbf{Funding}

This work was supported by Ministerio de Ciencia, Innovaci\'{o}n y Universidades, Project PGC2018-097621-B-100.













\vspace{2cc}

\vspace{1cc}


{\small
\noindent

}\end{document}